\title[Об успокоении системы управления нейтрального типа на временнoм графе-звезде]{Об успокоении системы управления нейтрального типа на временнoм графе-звезде с глобальным запаздыванием, пропорциональным времени} 
\subjclass{517.9} 
\author{А.П. Леднов} 
\address{Саратовский национальный исследовательский государственный университет имени Н.Г. Чернышевского, Саратов, Россия} 
\address{Московский центр фундаментальной и прикладной математики, Москва, Россия} 
\address{Московский государственный университет имени М.В. Ломоносова, Москва, Россия} 
\email{lednovalexsandr@gmail.com} 
\thanks{Работа выполнена при поддержке РНФ, проект №~24-71-10003.}
\theoremstyle{plain} 
\newtheorem{theorem}{Теорема}[section]
\newtheorem{lemma}{Лемма}[section] 
\theoremstyle{definition} 
\newtheorem{definition}{Определение}[section]
\newtheorem{remark}{Замечание}[section]
\newtheorem{example}{Пример}[section]
\numberwithin{equation}{section}
\begin{document}
\begin{abstract}
На временн\'ом графе-звезде рассматривается задача об оптимальном успокоении системы управления для обобщенного уравнения пантографа, представляющего собой уравнение нейтрального типа с запаздыванием, пропорциональным времени.
Запаздывание в системе распространяется через внутреннюю вершину графа.
Исследуется вариационная задача минимизации функционала энергии с учетом вероятностей сценариев, соответствующих различным ребрам.
Установлено, что оптимальная траектория удовлетворяет условиям типа Кирхгофа во внутренней вершине.
Доказана эквивалентность вариационной задачи некоторой краевой задаче для функционально-дифференциальных уравнений второго порядка на графе и установлена однозначная разрешимость обеих задач.
\end{abstract}
\maketitle
\tableofcontents

\section{Введение} 

Дифференциальные операторы на графах, часто называемые квантовыми графами, активно изучаются с прошлого века в связи с моделированием различных процессов, протекающих в сложных системах, представимых в виде пространственных сетей \cite{7,8,9,10,-10}. Для таких моделей помимо условий непрерывности в вершинах характерны также условия Кирхгофа.

Для задания на графах функционально-дифференциальных операторов с запаздыванием, С.А. Бутериным в работе \cite{5} была предложена концепция глобального запаздывания. Последнее означает, что запаздывание распространяется через внутренние вершины графа. Другими словами,  решение уравнения на входящем ребре служит начальной функцией для уравнений на исходящих ребрах. Глобальное запаздывание стало альтернативой локально нелокальному случаю рассмотренному в \cite{11}, когда уравнение на каждом ребре имеет свой собственный параметр запаздывания и может быть решено отдельно от уравнений на остальных ребрах.

Использование концепции глобального запаздывания позволило перенести на графы класс задач об успокоении управляемых систем с последействием. Впервые задача этого типа была поставлена и исследована на интервале Н. Н. Красовским \cite{1} для системы управления с постоянным запаздыванием, описываемой уравнением запаздывающего типа. Эта задача получила дальнейшее развитие в работах А. Л. Скубачевского \cite{2,14} и позже в работах других авторов (см. \cite{-12} и литературу там), где рассматриваемая система управления имеет нейтральный тип, т.е. содержит запаздывание и в главных членах. Это существенно усложняет задачу и, в частности, приводит к понятию обобщенного решения соответствующей краевой задачи для оптимальной траектории. С.А. Бутерин в работах \cite{3,4} распространил на графы задачу об успокоении систем управления с постоянным запаздыванием. В \cite{3} рассмотрен случай уравнения первого порядка запаздывающего типа, а в \cite{4} -- общий случай нестационарной управляемой системы, уравнения которой относятся к нейтральному типу и имеют произвольный порядок.

Рассмотрение указанной задачи на графах, в свою очередь, привело к концепции временн\'ого графа, ребра которого, в отличие от пространственной сети, отождествляются с промежутками времени, а каждая внутренняя вершина понимается как точка разветвления процесса, дающая несколько различных сценариев дальнейшего его протекания. В \cite{3,4,12,13} показано, что на временн\'ых графах также могут возникать условия Кирхгофа. А именно,  им будет удовлетворять  траектория течения процесса, являющаяся оптимальной с учетом сразу всех сценариев. Кроме того, в \cite{3,12} была предложена стохастическая интерпретация системы управления на временн\'ом дереве. В частности, к системе на дереве приведет замена коэффициентов в уравнении на интервале дискретными случайными процессами с дискретным временем. 

В работах \cite{98,99} на графы была перенесена задача об успокоении системы управления, описываемой так называемым уравнением пантографа \cite{19}. В данном случае запаздывание не постоянно, а является пропорциональным времени сжатием. Рассматривалась система управления, задаваемая классическим уравнением пантографа
\begin{equation}
y^{\prime}(t)+b y(t)+c y(q^{-1} t)=0, \quad t>0, \quad y(0)=y_0,
\label{0.1}
\end{equation}
где $b,c\in\mathbb{R}$, $q>1$. Уравнение вида \eqref{0.1} широко применяется в прикладных задачах. Так, например, это уравнение используется при моделировании динамики контактного провода электроснабжения подвижного состава \cite{20}. Для $q\in(0,1)$ оно возникает в астрофизике при описании поглощения света межзвездной материей \cite{15}, а также в биологии при моделировании процесса роста клеток \cite{16}.

В данной работе мы переходим к рассмотрению системы управления для обобщенного уравнения пантографа
\begin{equation}
y^{\prime}(t)+a y^{\prime}(q^{-1} t)+b y(t)+c y(q^{-1} t)=0, \quad t>0, \quad y(0)=y_0,
\label{0.2}
\end{equation}
где $a, b, c \in \mathbb{R}$, $q>1$. Уравнение вида \eqref{0.2} изучалось в \cite{17,18} и ряде других работ. Были получены различные представления решения и показано, что разрешимость задачи \eqref{0.2} зависит от коэффициента $a$ и от класса гладкости решений. В частности, при $a\neq -q^{k}$, $k\geqslant 0$, существует единственное решение в $C^{\infty}[0,+\infty)$; при этом в зависимости от значения $a$ могут существовать и другие $C^1$-решения, не принадлежащие $C^{\infty}[0,+\infty)$.

На интервале задача об успокоении системы управления для обобщенного уравнения пантографа была рассмотрена Л.Е. Россовским в работе \cite{6}, где исследовалась следующая система управления нейтрального типа:
\begin{equation}
y'(t) + ay'(q^{-1}t) + by(t) + cy(q^{-1}t) = u(t), \quad t>0,
\label{1}
\end{equation}
\begin{equation}
y(0) = y_0 \in \mathbb{R},
\label{2}
\end{equation}
где $a,b,c\in\mathbb{R}$, $q>1$, а $u(t)$ -- управляющее воздействие, которое является вещественнозначной функцией; состояние системы в начальный момент времени задается условием \eqref{2}.

Задача управления формулируется следующим образом: требуется найти $u(t)$, приводящее систему \eqref{1}, \eqref{2} в равновесие $y(t) = 0$ при $t \geqslant T$ для некоторого $T>0$.

Для этого достаточно найти $u(t) \in L_2(0,T)$, приводящее систему в состояние 
\begin{equation}
y\left(t\right)=0, \quad q^{-1}T \leqslant t \leqslant T, 
\label{3}
\end{equation}
а затем сбросить управление, положив $u\left(t\right) \equiv 0$ при $t>T$.  При этом из всех возможных управлений ищется управление, обладающее минимальной энергией
$\|u\|^2_{L_2(0,T)}$.

В результате получается вариационная задача о минимизации квадратичного функционала
\begin{equation}\mathcal{J}\left(y\right)=\displaystyle\int_0^{T}\left(y^{\prime}\left(t\right)+a y^{\prime}\left(q^{-1} t\right) +b y\left(t\right)+c y\left(q^{-1} t\right)\right)^2 d t \longrightarrow \min \label{4}\end{equation}
на множестве функций $y\left(t\right)\in W^1_2[0,T]$, удовлетворяющих краевым условиям  \eqref{2}, \eqref{3}. 

Исследование вариационной задачи \eqref{2}--\eqref{4} включает сведение ее к эквивалентной краевой задаче для функционально-дифференциального уравнения второго порядка с растяжением и сжатием аргумента.

В частности, установлено, что если функция $y\left(t\right)\in W_2^1[0,T]$, удовлетворяющая условиям \eqref{2},\eqref{3}, минимизирует функционал \eqref{4}, то она является решением краевой задачи для уравнения
\begin{multline}
-\left(\left(1+a^2 q\right) y^{\prime}(t)+a y^{\prime}\left(q^{-1} t\right)+a q y^{\prime}(q t)\right)^{\prime}+\left(a b-c q^{-1}\right) y^{\prime}\left(q^{-1} t\right)+ \\
+\left(c q-a b q^2\right) y^{\prime}(q t)+\left(b^2+c^2 q\right) y(t)+b c y\left(q^{-1} t\right)+b c q y(q t)=0, \quad 0<t<q^{-1}T, 
\label{-1}
\end{multline}
при краевых условиях \eqref{2} и \eqref{3}. При этом, поскольку задача  \eqref{2}, \eqref{3}, \eqref{-1} может не иметь решения в $W_2^2\left[0,q^{-1}T\right]$, ее решение является обобщенным в смысле выполнения условия
$$
\left(1+a^2 q\right) y^{\prime}(t)+a y^{\prime}\left(q^{-1} t\right)+a q y^{\prime}(q t)\in W_2^1\left[0,q^{-1}T\right].
$$

Обратное утверждение также верно: если $y\left(t\right)\in W^1_2[0,T]$ является обобщенным решением задачи \eqref{2}, \eqref{3}, \eqref{-1}, то $y$ доставляет минимум функционалу \eqref{4}.

Следующая теорема при предположении $|a|\neq q^{-\frac{1}{2}}$, обеспечивающем коэрцитивность функционала $\mathcal{J}(y)$, устанавливает существование и единственность обобщенного решения краевой задачи \eqref{2}, \eqref{3}, \eqref{-1} и, стало быть, однозначную разрешимость вариационной задачи \eqref{2}--\eqref{4}.

\begin{theorem}[\cite{6}]\label{Th-2} Пусть $|a| \neq q^{-\frac{1}{2}}$. Тогда задача \eqref{2}, \eqref{3}, \eqref{-1}, имеет единственное обобщенное решение $y \in W_2^1[0,T]$. \end{theorem}

В следующем разделе дается постановка вариационной задачи на графе-звезде. Далее, в третьем разделе, следуя общей стратегии для интервала, устанавливается эквивалентность вариационной задачи некоторой краевой задаче для функционально-дифференциальных уравнений второго порядка на графе. В заключительном разделе доказывается однозначная разрешимость обеих задач.

\section{Постановка вариационной задачи на графе-звезде} 

Рассмотрим, изображенный на рисунке \ref{fig1}, граф типа звезды $\Gamma_m$, состоящий из $m$ ребер. Как обычно, под функцией $y$ на графе $\Gamma_m$ будем понимать кортеж $y=[y_1,\dots,y_m]$, в котором компонента $y_j$ определена на ребре $e_j$, т.е. $y_j=y_j\left(t\right)$,  $t\in[0,T_j]$.

Пусть до момента времени $t=T_1>0,$ ассоциированного с единственной внутренней вершиной $v_1$ графа $\Gamma_m$, наша система управления с запаздыванием пропорциональным времени на $\Gamma_m$ описывается уравнением
\begin{equation}\ell_1 y(t):=y_1^\prime(t)+a_1y_1^\prime(q^{-1}t)+b_1y_1(t)+c_1y_1(q^{-1}t)=u_1(t), \quad 0<t<T_1,\label{1.1}\end{equation}
заданным на ребре $e_1$ графа $\Gamma_m$, с начальным условием 
\begin{equation}y_1(0)=y_0. \label{1.5}\end{equation}
При $t=T_1$, т.е. в вершине $v_1$, система разветвляется на $m-1$ независимых параллельных процессов, описываемых уравнениями 
\begin{equation}\begin{array}{c}\ell_j y(t):=y_j^\prime(t)+a_jy_j^\prime(q^{-1}(t-(q-1)T_1))+b_jy_j(t)+c_jy_j(q^{-1}(t-(q-1)T_1))=u_j(t),\\ t>0,\quad j=\overline{2, m},\end{array} \label{1.2}\end{equation}
но имеющих общую историю, определяемую уравнением \eqref{1.1} с начальным условием \eqref{1.5} и условиями
\begin{equation}y_j(t)=y_1(t+T_1), \quad (q^{-1}-1)T_1<t<0, \quad j=\overline{2,m}, \label{1.3}\end{equation}
а также условиями непрерывности в вершине $v_1$, которые в данном случае согласуются с \eqref{1.3} при $t\to-0$: 
\begin{equation}y_j(0)=y_1(T_1), \quad j=\overline{2,m}. \label{1.4}\end{equation}
Как и в случае интервала, мы предполагаем, что $q>1$, $y_0\in\mathbb{R}$ и все $a_j$, $b_j$, $c_j\in\mathbb{R}$.

\begin{figure}[!ht]
\begin{center}
\includegraphics[scale=0.3]{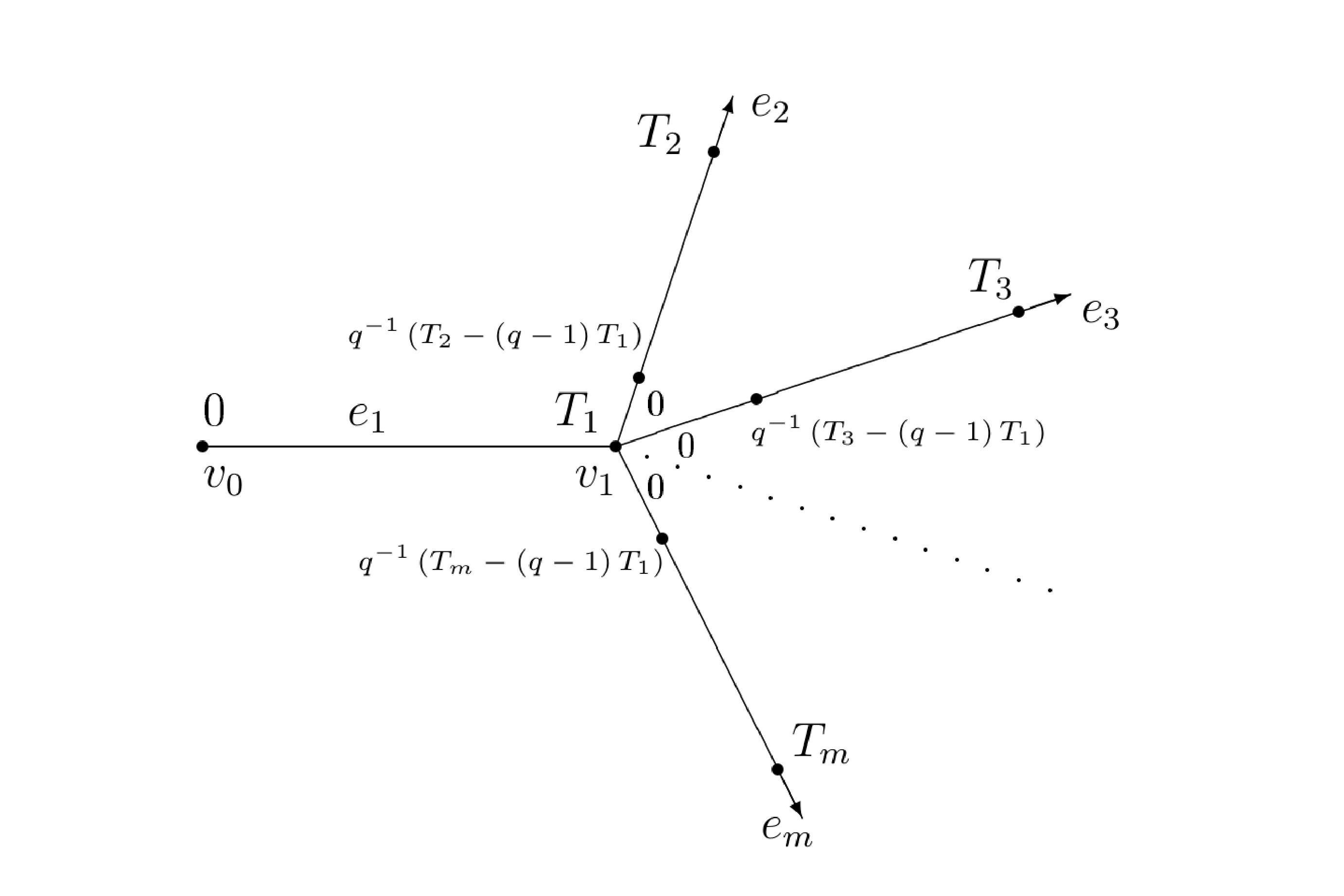}
\end{center}
\caption{\small{Граф $\Gamma_m$}}
\label{fig1}
\end{figure}

В \eqref{1.2} $j$-ое уравнение задано на ребре $e_j$ графа $\Gamma_m$, представляющем собой, вообще говоря бесконечный луч, выходящий из вершины $v_1$. Условия \eqref{1.3} означают, что запаздывание распространяется через вершину $v_1$.

\begin{example}
Пусть $m = 2$, $a := a_1 = a_2$, $b := b_1 = b_2$, $c := c_1 = c_2$, и  
$$
y(t) := 
\begin{cases} 
y_1(t), & 0 \leq t \leq T_1, \\
y_2(t - T_1), & t > T_1, 
\end{cases}
\quad
u(t) := 
\begin{cases} 
u_1(t), & 0 < t < T_1, \\
u_2(t - T_1), & t > T_1.
\end{cases}
$$
Тогда система управления \eqref{1.1}--\eqref{1.4} принимает вид \eqref{1}, \eqref{2}.
\end{example}

Предположим для определенности, что $T_j>\left(q-1\right)T_1$ при всех $j=\overline{2,m}$. Для успокоения системы \eqref{1.1}--\eqref{1.4} сразу при всех сценариях  
нужно привести ее в состояние
\begin{equation}
y_j\left(t\right)=0, \quad q^{-1}\left(T_j-\left(q-1\right)T_1\right)\leqslant t\leqslant T_j, \quad j=\overline{2,m},
\label{1.6}
\end{equation}
выбрав подходящие управления  $u_j\left(t\right)$, $j=\overline{1,m}$. Тогда, положив $u_j\left(t\right)\equiv0$ при $t>T_j$, $j=\overline{2,m}$, будем иметь $y_j(t)=0$ при тех же $t$ и $j$. Другими словами, система будет приведена в равновесие на каждом ребре, выходящем из вершины $v_1$. Поскольку такие $u_j\left(t\right)$ не единственны, будем искать их, минимизируя усилия $\|u_j\|^2_{L_2\left(0,T_j\right)}$.  Кроме того, аналогично тому, как это было сделано в \cite{3} для случая постоянного запаздывания, мы можем регулировать степень участия каждого $\|u_j\|^2_{L_2\left(0,T_j\right)}$ в соответствующем функционале энергии, выбирая определенный положительный вес $\alpha_j$. 

Таким образом, приходим к вариационной задаче 
\begin{equation}
\mathcal{J}\left(y\right)=\sum_{j=1}^m\alpha_j\displaystyle\int_0^{T_j}\left(\ell_j y\left(t\right)\right)^2\,dt\to\min
\label{1.7}
\end{equation}
при условиях \eqref{1.5}, \eqref{1.3}--\eqref{1.6}, где $\alpha_j>0$, $j=\overline{1,m}$, фиксированы.

Для выбора весов $\alpha_j$, $j=\overline{1,m}$, можно применить вероятностный подход в соответствии с интерпретацией системы управления на временн\'ом графе, предложенной в \cite{3}. А именно, для этого нужно положить $\alpha_1=1$, а в качестве $\alpha_j$, $j=\overline{2,m}$, взять вероятности сценариев, задаваемых соответствующими уравнениями в \eqref{1.2}. Тогда $\alpha_2+\ldots+\alpha_m=1$. Последнее тождество также обеспечивает соответствие случаю интервала, если уравнения в \eqref{1.2} не зависят от $j$, т.е. являются искусственными копиями единственного возможного сценария (см. пример 2 в \cite{3}).

Заметим, что условия \eqref{1.3} никаких ограничений на функцию $y=[y_1,\dots,y_m]$ не накладывают. Поэтому условимся, что взятие $\mathcal{J}\left(y\right)$, равно как и $\ell_j y$ при $j=\overline{2,m}$, от какой бы то ни было функции $y$ на $\Gamma_m$ автоматически подразумевает применение условий \eqref{1.3}. Для краткости также введем обозначение $\ell y:=\left[\ell_1y,\dots,\ell_my\right]$.

\section{Сведение к краевой задаче}

Рассмотрим вещественное гильбертово пространство $W_2^k\left(\Gamma_m\right)=\bigoplus_{j=1}^m W_2^k[0,T_j]$ со скалярным произведением $$\left(y,z\right)_{W_2^k\left(\Gamma_m\right)}=\sum_{j=1}^m\left(y_j,z_j\right)_{W_2^k[0,T_j]},$$ где $y=[y_1,\dots,y_m]$, $z=[z_1,\dots,z_m]$, $\left(f,g\right)_{W_2^k[a,b]}=\sum_{\nu=0}^{k} \left(f^{\left(\nu\right)},g^{\left(\nu\right)}\right)_{L_2\left(a,b\right)}$ -- скалярное произведение в $W_2^k [a,b]$, а $\left(\cdot,\cdot\right)_{L_2\left(a,b\right)}$ -- скалярное произведение в $L_2\left(a,b\right)$.

Обозначим через $\mathcal{W}$ замкнутое подпространство $W_2^1\left(\Gamma_m\right)$, состоящее из всех наборов $[y_1,\dots,y_m]$, удовлетворяющих условиями \eqref{1.4}, \eqref{1.6} и $y_1\left(0\right)=0$.

Также введем пространство $W_2^k\left(\widetilde{\Gamma}_m\right)=W_2^k[0,T_1]\oplus\displaystyle\bigoplus_{j=2}^m W_2^k[0,q^{-1}\left(T_j-\left(q-1\right)T_1\right)]$.

\begin{lemma}\label{L1}Если $y\in W_2^1\left(\Gamma_m\right)$ является решением вариационной задачи \eqref{1.5}, \eqref{1.3}--\eqref{1.7}, то 
\begin{equation}
B\left(y, w\right):=\sum_{j=1}^m \alpha_j\displaystyle\int_0^{T_j} \ell_j y\left(t\right)\ell_j  w\left(t\right) dt =0 \quad \forall w\in\mathcal{W}.
\label{2.1}
\end{equation}
Обратно, если для некоторого $y\in W_2^1\left(\Gamma_m\right)$ выполняется \eqref{1.5}, \eqref{1.4}, \eqref{1.6} и \eqref{2.1}, то $y$ является решением задачи \eqref{1.5}, \eqref{1.3}--\eqref{1.7}.
\end{lemma}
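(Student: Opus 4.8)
The plan is to read \eqref{2.1} as the first-variation (Euler--Lagrange) condition for the minimizer of the nonnegative quadratic functional $\mathcal{J}(y)=B(y,y)$ over an affine subset of $W_2^1(\Gamma_m)$, and to exploit the convexity of $\mathcal{J}$ to obtain the converse essentially for free. First I would record two preliminary facts. Each operator $\ell_j$ is linear in $y$ — the extension \eqref{1.3} to negative arguments is itself linear in $y_1$ — so $B$ is a symmetric bilinear form, $\mathcal{J}(y)=B(y,y)\geqslant 0$, and $B(y,w)$ is finite for $y,w\in W_2^1(\Gamma_m)$ by Cauchy--Schwarz since $\ell_j y,\ell_j w\in L_2(0,T_j)$. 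Moreover the admissible set $M$ of problem \eqref{1.5}, \eqref{1.3}--\eqref{1.7} — the functions satisfying \eqref{1.5}, \eqref{1.4}, \eqref{1.6}, with $\ell_j y$ understood via \eqref{1.3} — is an affine translate of $\mathcal{W}$. Indeed \eqref{1.4} and \eqref{1.6} are homogeneous, while \eqref{1.5} becomes $w_1(0)=0$ after a shift; hence for $y,z\in M$ the difference $w=z-y$ lies in $\mathcal{W}$, and for any fixed $y_\ast\in M$ and $w\in\mathcal{W}$ one has $y_\ast+w\in M$, i.e. $M=y_\ast+\mathcal{W}$.

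For the \emph{forward} implication I would fix a minimizer $y\in M$ and an arbitrary $w\in\mathcal{W}$. Since $y+\varepsilon w\in M$ for every $\varepsilon\in\mathbb{R}$, linearity of $\ell_j$ gives the exact expansion $\mathcal{J}(y+\varepsilon w)=\mathcal{J}(y)+2\varepsilon B(y,w)+\varepsilon^2 B(w,w)$. This quadratic polynomial in $\varepsilon$ attains its minimum at $\varepsilon=0$, so its derivative there, $2B(y,w)$, must vanish; as $w\in\mathcal{W}$ was arbitrary, \eqref{2.1} follows.

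For the \emph{converse} I would start from $y$ satisfying \eqref{1.5}, \eqref{1.4}, \eqref{1.6} and \eqref{2.1}, so that $y$ is admissible, $y\in M$ (condition \eqref{1.3} being built into the definition of $\ell_j y$). For an arbitrary competitor $z\in M$ I write $z=y+w$ with $w=z-y\in\mathcal{W}$; the same expansion together with \eqref{2.1} gives $\mathcal{J}(z)=\mathcal{J}(y)+B(w,w)\geqslant\mathcal{J}(y)$, because $B(w,w)=\sum_{j=1}^m\alpha_j\int_0^{T_j}(\ell_j w)^2\,dt\geqslant 0$ (all $\alpha_j>0$). Thus $y$ minimizes $\mathcal{J}$ over $M$ and solves \eqref{1.5}, \eqref{1.3}--\eqref{1.7}.

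The variational calculus above is routine; the one point that needs care, and which I expect to be the main obstacle, is the bookkeeping around \eqref{1.3}. One must justify that \eqref{1.3} may be omitted from the definition of $\mathcal{W}$ — that it acts as a linear extension rule rather than an independent constraint — so that $\ell_j w$ is well defined for every $w\in\mathcal{W}$ and $M$ is honestly the affine space $y_\ast+\mathcal{W}$. Once this is settled, the argument uses nothing beyond $B(w,w)\geqslant 0$; no positive-definiteness of $B$, and in particular no nondegeneracy hypothesis of the type $|a_j|\neq q^{-1/2}$, is required for the lemma itself.
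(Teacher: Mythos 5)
Your proposal is correct and follows essentially the same route as the paper: both arguments rest on the exact quadratic expansion $\mathcal{J}(y+sw)=\mathcal{J}(y)+2sB(y,w)+s^2\mathcal{J}(w)$, deducing $B(y,w)=0$ from minimality in the forward direction and using $\mathcal{J}(w)\geqslant 0$ for the converse. Your additional remark that the admissible set is an affine translate of $\mathcal{W}$ (with \eqref{1.3} acting as a definitional extension rule rather than a constraint) is exactly the point the paper treats implicitly, and you are right that no nondegeneracy condition on the $a_j$ is needed here.
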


\begin{proof}
Пусть $y\in W_2^1\left(\Gamma_m\right)$ -- решение задачи \eqref{1.5}, \eqref{1.3}--\eqref{1.7}. Тогда для произвольной фиксированной функции $ w\in\mathcal{W}$ сумма $y+s w$ принадлежит $W_2^1\left(\Gamma_m\right)$ при любом $s\in\mathbb{R}$ и удовлетворяет условиям  \eqref{1.5}, \eqref{1.4}, \eqref{1.6}. Нетрудно видеть, что $$\mathcal{J}\left(y+s w\right) = \mathcal{J}\left(y\right)+2sB\left(y, w\right)+s^2\mathcal{J}\left( w\right).$$ Так как  $\mathcal{J}\left(y+s w\right)\geqslant\mathcal{J}\left(y\right)$ для всех $s\in\mathbb{R}$, то выполняется \eqref{2.1}.

Обратно, пусть  $y\in W_2^1\left(\Gamma_m\right)$ удовлетворяет условиям \eqref{1.5}, \eqref{1.4} и \eqref{1.6}. Тогда \eqref{2.1} влечет
$$\mathcal{J}\left(y+ w\right)=\mathcal{J}\left(y\right)+2B\left(y, w\right)+\mathcal{J}\left( w\right)\geqslant\mathcal{J}\left(y\right) \quad \forall  w\in\mathcal{W}.$$
Таким образом, $y$ доставляет минимум функционалу \eqref{1.7} при условиях \eqref{1.5}, \eqref{1.4} и \eqref{1.6}. 
\end{proof}

Преобразуем \eqref{2.1}, сделав замену переменных в членах, содержащих $w_j\left(q^{-1}\left(t-\left(q-1\right)T_1\right)\right)$ и $w_1\left(q^{-1}t\right)$. 
В результате выражение для $B\left(y, w\right)$ примет вид
\begin{multline*}
B\left(y, w\right) = \displaystyle\sum_{j=1}^m\alpha_j\displaystyle\int_0^{T_j}\ell_jy\left(t\right)\left( w_j^\prime\left(t\right)+b_jw_j\left(t\right)\right)dt +\\+ 
q\left(\alpha_1\displaystyle\int_0^{q^{-1}T_1}\ell_1y\left(qt\right) \left(a_1w_1^\prime\left(t\right)+c_1w_1\left(t\right)\right)dt \right.+\\+\left. \displaystyle\sum_{j=2}^m\alpha_j 
\displaystyle\int_{\left(q^{-1}-1\right)T_1}^{q^{-1}\left(T_j-\left(q-1\right)T_1\right)} \ell_j y\left(qt+\left(q-1\right)T_1\right) \left(a_jw_j^\prime\left(t\right)+c_jw_j\left(t\right)\right)dt \right).
\end{multline*}
Применяя \eqref{1.3} к $ w=\left[ w_1,\dots, w_m\right]\in\mathcal{W}$, можно представить
\begin{multline*}\displaystyle\int_{\left(q^{-1}-1\right)T_1}^{q^{-1}\left(T_j-\left(q-1\right)T_1\right)} \ell_j y\left(qt+\left(q-1\right)T_1\right) \left( a_jw_j^\prime\left(t\right)+b_jw_j\left(t\right)\right)dt =
\\ 
=\displaystyle\int_{0}^{q^{-1}\left(T_j-\left(q-1\right)T_1\right)} \ell_j y\left(qt+\left(q-1\right)T_1\right) \left(a_jw_j^\prime\left(t\right)+c_jw_j\left(t\right)\right)
dt + \\  +\displaystyle\int_{q^{-1}T_1}^{T_1} \ell_j y\left(qt-T_1\right) \left(a_jw_1^\prime\left(t\right)+c_jw_1\left(t\right)\right)dt, \quad j=\overline{2,m}.\end{multline*}
Тогда перепишем \eqref{2.1} в эквивалентном виде
\begin{multline}
B\left(y, w\right)=\displaystyle\sum_{j=1}^m\left(\displaystyle\int_0^{T_j}
\left(\alpha_j\ell_{j}y\left(t\right)+\widetilde{\ell}_{1,j}y\left(t\right)\right)
 w_j^\prime\left(t\right)dt+\right.
\\
+\left.\displaystyle\int_0^{T_j}\left(\alpha_jb_j\ell_{j}y\left(t\right)+\widetilde{\ell}_{0,j}y\left(t\right)\right) w_j\left(t\right)dt\right)=0\quad\forall w\in\mathcal{W},
\label{2.2}
\end{multline}
где
\begin{equation}
\begin{aligned}
\widetilde{\ell}_{\nu,1} y\left(t\right) &= \left\{\begin{array}{cc} q \alpha_1\theta_{\nu,1}\ell_1 y\left(qt\right), & 0<t<q^{-1}T_1,\\[3mm] q \displaystyle\sum_{k=2}^m \alpha_k \theta_{\nu,k}\ell_k y\left(qt-T_1\right), & q^{-1}T_1< t<T_1, \end{array}\right. 
\\
\widetilde{\ell}_{\nu,j} y\left(t\right) &= \left\{\begin{array}{cc} q \alpha_j \theta_{\nu,j}\ell_j y\left(qt+\left(q-1\right)T_1\right), & 0<t<l_j,\\[3mm]\displaystyle 0, & l_j< t<T_j,  \end{array}\right. \quad j=\overline{2,m},
\end{aligned} 
\label{2.4}
\end{equation}
при $\nu\in\{0,1\}$ и
$$
\theta_{\nu,j} = \begin{cases} 
a_j, &  \nu = 1, \\
c_j, &  \nu = 0,
\end{cases}  \quad j=\overline{1, m}.
$$

Обозначим через $\mathcal{B}$ краевую задачу для функционально-дифференциальных уравнений второго порядка
\begin{equation}
\mathcal{L}_j y\left(t\right):=-\left(\widehat{\ell}_jy\left(t\right)\right)^\prime
+\alpha_j b_j \ell_j y\left(t\right) + \widetilde{\ell}_{0,j}y\left(t\right)=0,\quad 0<t<l_j,\quad j=\overline{1,m},
\label{2.5}
\end{equation}
при условиях \eqref{1.5}, \eqref{1.3}--\eqref{1.6} и условии типа Кирхгофа
\begin{equation}
\widehat{\ell}_1y\left(T_1\right)=\displaystyle\sum_{j=2}^m\widehat{\ell}_jy\left(0\right),
\label{2.6}
\end{equation}
где $\widetilde{\ell}_{\nu,j}$, $\nu\in\{0,1\}$, определены в \eqref{2.4}, а выражения $\widehat{\ell}_jy\left(t\right)$ имеют вид
\begin{equation}
\widehat{\ell}_jy\left(t\right):= \alpha_j\ell_j y\left(t\right)+\widetilde{\ell}_{1,j}y\left(t\right), \quad j=\overline{1, m},
\label{2.7}
\end{equation}
тогда как
\begin{equation}
l_1:=T_1, \quad l_j:=q^{-1}\left(T_j-\left(q-1\right)T_1\right), \quad j=\overline{2,m}.
\label{2.8}
\end{equation}

\begin{definition}
Функцию $y=[y_1,\dots,y_m]\in W_2^1({\Gamma}_m)$ назовем обобщенным решением краевой задачи $\mathcal{B}$, если $\widehat{\ell}_jy\left(t\right)\in W_2^1\left[0,l_j\right]$, $j=\overline{1, m}$, а функции $y_j$, $j=\overline{1, m}$, удовлетворяют уравнениям  \eqref{2.5} и условиям \eqref{1.5}, \eqref{1.3}--\eqref{1.6}, \eqref{2.6}.
\end{definition}

Имеет место следующее утверждение.

\begin{lemma}\label{L2}Если $y\in W_2^1\left(\Gamma_m\right)$ удовлетворяет условиям \eqref{1.5}, \eqref{1.4}, \eqref{1.6} и \eqref{2.1}, то $y$ является обобщенным решением краевой задачи $\mathcal{B}$. Обратно, любое обобщенное решение краевой задачи $\mathcal{B}$, подчиняется условию \eqref{2.1}.
\end{lemma}

\begin{proof}
Пусть $y\in W_2^1\left(\Gamma_m\right)$  и удовлетворяет условиям \eqref{1.5}, \eqref{1.4}, \eqref{1.6} и \eqref{2.1}. Учитывая, что \eqref{2.1} эквивалентно \eqref{2.2} и применяя лемму 2 из \cite{3} к \eqref{2.2} вместе с \eqref{2.8}, получаем, что $\widehat{\ell}_jy\left(t\right)\in W_2^1\left[0,l_j\right]$, $j=\overline{1, m}$, и выполняется \eqref{2.6}.
Кроме того, интегрируя по частям в \eqref{2.2} и используя  \eqref{1.4}, \eqref{1.6},  будем иметь 
\begin{equation}
B\left(y, w\right)=  w_1\left(T_1\right)\left(\widehat{\ell}_1y\left(T_1\right)-\displaystyle\sum_{j=2}^m\widehat{\ell}_jy\left(0\right)\right)+ \displaystyle\sum_{j=1}^m\displaystyle\int_0^{l_j}\mathcal{L}_j y\left(t\right) w_j\left(t\right)dt=0.
\label{2.9}
\end{equation}
В силу \eqref{2.6}, а также произвольности $ w_j$, из \eqref{2.9} получаем \eqref{2.5}.

Обратно, пусть $y$ --  обобщенное решение задачи $\mathcal{B}$. Тогда левое равенство в \eqref{2.9} дает \eqref{2.1}.
\end{proof}

Объединив леммы \ref{L1} и \ref{L2}, получаем следующий результат.

\begin{theorem}\label{Th1}Функция $y\in W_2^1\left(\Gamma_m\right)$ является решением вариационной задачи \eqref{1.5}, \eqref{1.3}--\eqref{1.7} тогда и только тогда, когда $y$ является обобщенным решением краевой задачи $\mathcal{B}$.\end{theorem}

\section{Однозначная разрешимость}  

В данном разделе устанавливается однозначная разрешимость краевой задачи $\mathcal{B}$, а согласно теореме \ref{Th1} -- и вариационной задачи \eqref{1.5}, \eqref{1.3}--\eqref{1.7}.

Введем обозначения
\begin{equation}
\ell_1^0 y\left(t\right)=y_1^\prime\left(t\right)+a_1y_1^\prime\left(q^{-1}t\right),\;\;\quad\ell_j^0 y\left(t\right)=y_j^\prime\left(t\right)+a_jy_j^\prime\left(q^{-1}\left(t-\left(q-1\right)T_1\right)\right),\;\;\; \quad j=\overline{2, m},
\label{3.5}
\end{equation}
\begin{equation}
\ell_1^1 y\left(t\right)=b_1y_1\left(t\right)+c_1y_1\left(q^{-1}t\right),\quad
\ell_j^1 y\left(t\right)=b_jy_j\left(t\right)+c_jy_j\left(q^{-1}\left(t-\left(q-1\right)T_1\right)\right), \quad j=\overline{2, m},
\label{3.5.2}
\end{equation}
$$\mathcal{J}_\nu\left(y\right)=\sum_{j=1}^m\displaystyle\int_0^{T_j}\left|\ell_j^\nu y\left(t\right)\right|^2dt,\quad \nu=0,1,$$
где автоматически предполагается \eqref{1.3}.

\begin{lemma}\label{L3} Существуют $C_0$ и $C_1$ такие, что 
\begin{equation}
\mathcal{J}(w)\leqslant C_0 \| w\|^2_{W^1_2\left(\widetilde{\Gamma}_m\right)},\quad\quad \mathcal{J}_1(w)\leqslant C_1 \| w\|^2_{L_2\left(\widetilde{\Gamma}_m\right)} \quad\quad\forall w\in\mathcal{W}.
\label{3.1}
\end{equation}
\end{lemma}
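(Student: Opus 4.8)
The plan is to prove both inequalities by a single elementary mechanism: splitting each operator into its undelayed and delayed parts, absorbing the dilation $q^{-1}$ through a change of variables, and treating the negative arguments by the continuation convention \eqref{1.3}. First I would reduce the first inequality to the second together with a bound on $\mathcal{J}_0$. Since $\ell_j w=\ell_j^0 w+\ell_j^1 w$ by \eqref{3.5}, \eqref{3.5.2} and the weights $\alpha_j>0$ are fixed, the elementary estimate $(A+B)^2\leqslant 2A^2+2B^2$ gives
$$\mathcal{J}(w)\leqslant 2\Big(\max_{1\leqslant j\leqslant m}\alpha_j\Big)\big(\mathcal{J}_0(w)+\mathcal{J}_1(w)\big),$$
so it suffices to show $\mathcal{J}_0(w)\leqslant C\,\|w\|^2_{W_2^1(\widetilde{\Gamma}_m)}$ and $\mathcal{J}_1(w)\leqslant C_1\,\|w\|^2_{L_2(\widetilde{\Gamma}_m)}$; the latter yields the second inequality outright, and combined with the $\mathcal{J}_0$ bound (and $\|\cdot\|_{L_2}\leqslant\|\cdot\|_{W_2^1}$) it yields the first.

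Next, applying $(A+B)^2\leqslant 2A^2+2B^2$ once more inside each $\mathcal{J}_\nu$, I separate the undelayed term ($w_j'(t)$ or $b_jw_j(t)$) from the delayed term ($a_jw_j'$ or $c_jw_j$ evaluated at the contracted argument). The undelayed parts integrate directly into $\|w_j'\|^2_{L_2[0,T_j]}$ and $\|w_j\|^2_{L_2[0,T_j]}$; because every $w\in\mathcal{W}$ vanishes on $[l_j,T_j]$ by \eqref{1.6}, with $l_j$ as in \eqref{2.8}, these coincide with the edge norms on $\widetilde{\Gamma}_m$. For the delayed parts I use the substitutions $s=q^{-1}t$ when $j=1$ and $s=q^{-1}(t-(q-1)T_1)$ when $j\geqslant 2$, each contributing a Jacobian factor $q$. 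For $j=1$ the image interval is $[0,q^{-1}T_1]\subset[0,T_1]$, so the delayed contribution is at most $q\,\|w_1\|^2_{L_2[0,T_1]}$ (respectively $q\,\|w_1'\|^2_{L_2[0,T_1]}$). For $j\geqslant 2$ the image interval is $\big[(q^{-1}-1)T_1,\,l_j\big]$, which I split at $s=0$; on $[0,l_j]$ the integral is bounded by the $j$-th edge norm of $w$ on $\widetilde{\Gamma}_m$.

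The remaining piece, the part of the $j\geqslant 2$ delayed integral over $\big[(q^{-1}-1)T_1,0\big]$, is exactly where the convention \eqref{1.3} enters: there $w_j(s)=w_1(s+T_1)$, and after shifting $\sigma=s+T_1$ this piece equals $q\int_{q^{-1}T_1}^{T_1}|w_1(\sigma)|^2\,d\sigma\leqslant q\,\|w_1\|^2_{L_2[0,T_1]}$, i.e.\ it is absorbed into the first-edge norm; for $\mathcal{J}_0$ the same computation applies with $w_1'$ in place of $w_1$, since differentiating the continuation \eqref{1.3} gives $w_j'(s)=w_1'(s+T_1)$ as $L_2$-functions on that interval (the two representations agree as elements of $W_2^1$ on the overlap thanks to the matching \eqref{1.4}, so their weak derivatives coincide). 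The one point requiring care — rather than a genuine obstacle — is precisely this bookkeeping on the $j\geqslant 2$ edges: checking that the negative portion of each delayed integral lands inside $\|w_1\|_{W_2^1[0,T_1]}$ and that passing to the derivative in \eqref{1.3} is legitimate. Summing over $j=\overline{1,m}$ and collecting the finitely many explicit constants (powers of $q$, the coefficients $a_j,b_j,c_j$, and $\max_j\alpha_j$) produces $C_0$ and $C_1$ independent of $w$, which completes the proof.
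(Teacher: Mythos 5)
Your proposal is correct and follows essentially the same route as the paper: the elementary inequality $(A+B)^2\leqslant 2(A^2+B^2)$, the substitution with Jacobian $q$, the use of \eqref{1.3} to convert the negative portion of the $j\geqslant 2$ delayed integrals into $q\|w_1^{(\nu)}\|^2_{L_2(q^{-1}T_1,T_1)}$, and the reduction $\mathcal{J}(w)\leqslant 2\widetilde{\alpha}_1(\mathcal{J}_0(w)+\mathcal{J}_1(w))$. The only cosmetic difference is that the paper closes the argument by citing a norm-equivalence lemma from \cite{3}, whereas you bound the $\mathcal{J}_0$-part directly by $\|w\|^2_{W_2^1(\widetilde{\Gamma}_m)}$ via the vanishing condition \eqref{1.6}, which suffices for this (upper-bound) direction.
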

\begin{proof}
Пусть $w\in\mathcal{W}$. Используя \eqref{3.5}, \eqref{3.5.2} и неравенство
\begin{equation}
\left(\gamma_1+\dots+\gamma_n\right)^2 \leqslant n\left(\gamma_1^2+\dots+\gamma_n^2\right),\quad  \gamma_1,\dots,\gamma_n\in{\mathbb R},
\label{3.2}
\end{equation}
для $n=2$, получаем
$$
\mathcal{J}_0(w) \leqslant 
2 \sum_{j=1}^m \|w_j^\prime\|^2_{L_2(0,T)}   
+ 
2\left(a_1^2\int_{0}^{T_1}\left(w_1^\prime\left(q^{-1}t\right)\right)^2dt + 
\sum_{j=2}^m a_j^2 \int_{0}^{T_j}  \left(w_j^\prime\left(q^{-1}\left(t-\left(q-1\right)T_1\right)\right)\right)^2  dt\right),
$$
$$
\mathcal{J}_1(w) \leqslant 
2 \sum_{j=1}^m b_j^2 \|w_j\|^2_{L_2(0,T)}
+
2\left(c_1^2\int_{0}^{T_1}w_1^2\left(q^{-1}t\right)dt + \sum_{j=2}^m c_j^2 \int_{0}^{T_j}  w_j^2\left(q^{-1}\left(t-\left(q-1\right)T_1\right)\right)  dt \right).\;\;\;\;\;\;
$$
Учитывая \eqref{1.3} применительно к $w$, для $\nu=0,1$ вычисляем
$$
\int_{0}^{T_1} 
\left(w_1^{(\nu)}\left(q^{-1}t\right)\right)^2
dt = q\| w_1^{(\nu)}\|^2_{L_2\left(0,q^{-1}T_1\right)},
$$
$$
\int_{0}^{T_j} 
\left(
w_j^{(\nu)}\left(q^{-1}\left(t-\left(q-1\right)T_1\right)\right)
\right)^2
dt  =q\| w_1^{(\nu)}\|^2_{L_2\left(q^{-1}T_1,T_1\right)} +
q\| w_j^{(\nu)}\|^2_{L_2\left(0,q^{-1}\left(T_j-\left(q-1\right)T_1\right)\right)}, \quad j=\overline{2,m}.
$$
В частности, это дает вторую оценку в \eqref{3.1}. 
Наконец, аналогично лемме 5 в \cite{3} нетрудно показать, что $\|w^\prime\|_{L_2\left(\Gamma_m\right)}$ порождает норму в $\mathcal{W}$, эквивалентную норме $\|w\|_{W^1_2\left(\widetilde{\Gamma}_m\right)}$.
Таким образом, используя оценку
$$
\mathcal{J}(w)\leqslant 2\widetilde{\alpha}_1\left(\mathcal{J}_0(w)+\mathcal{J}_1(w)\right),\quad \widetilde{\alpha}_1=\max\left\{\alpha_1,\dots,\alpha_m\right\},
$$ 
получаем первое неравенство в \eqref{3.1}. 
\end{proof}

\begin{lemma}\label{L4} Пусть $|a_1|\neq q^{-\frac{1}{2}}$ и ${|a_2|+\dots+|a_m|>0}$.
Тогда существует $C_2>0$ такое, что 
\begin{equation}
\mathcal{J}_0\left(w\right)\geqslant C_2\|w\|^2_{W_2^1\left(\widetilde{\Gamma}_m\right)} \quad\forall w\in\mathcal{W}.
\label{3.6}
\end{equation}
\end{lemma}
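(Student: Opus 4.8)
The plan is to reduce~\eqref{3.6}, by Lemma~\ref{L3}, to a coercivity estimate for $\mathcal{J}_0$ against $\|w'\|_{L_2(\Gamma_m)}$, and then to analyse $\mathcal{J}_0$ through the $q$-adic scaling structure of the pantograph operators. The proof of Lemma~\ref{L3} already records that on $\mathcal{W}$ the quantity $\|w'\|_{L_2(\Gamma_m)}$ is a norm equivalent to $\|w\|_{W_2^1(\widetilde{\Gamma}_m)}$; hence it suffices to find $C>0$ with $\mathcal{J}_0(w)\geqslant C\,\|w'\|^2_{L_2(\Gamma_m)}$ for all $w\in\mathcal{W}$. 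Writing $f_j=w_j'$, the task is to bound $\sum_{j=1}^m\|f_j\|^2_{L_2(0,T_j)}$ by $\sum_{j=1}^m\|\ell_j^0w\|^2_{L_2(0,T_j)}$.

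The decisive observation is that only the first edge is delicate. On $e_1$ the contraction $t\mapsto q^{-1}t$ generates infinitely many scaling levels $(q^{-k-1}T_1,q^{-k}T_1)$ accumulating at the origin, whereas for each $j=\overline{2,m}$ the affine substitution $\widetilde t=t+T_1$ turns $\ell_j^0w$ into the pure pantograph operator $\widetilde w_j'(\widetilde t)+a_j\widetilde w_j'(q^{-1}\widetilde t)$ on $(T_1,T_1+T_j)$, an interval bounded away from $0$, so only finitely many scaling levels meet it. I would first treat these finitely-many-level edges: condition~\eqref{1.6} forces $f_j$ to vanish on the top level, and the finite lower-triangular recursion $h_k+a_jq^{\frac{1}{2}}h_{k+1}=r_k$ (with $h_0=0$ and $r_k=\ell_j^0w$ on level $k$) then expresses $f_j$ on all remaining levels boundedly in terms of the $r_k$. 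Because a finite recursion is always boundedly solvable, this gives $\|f_j\|^2_{L_2(0,T_j)}\leqslant C\,\|\ell_j^0w\|^2_{L_2(0,T_j)}$ for every $j=\overline{2,m}$, \emph{with no restriction on $a_j$}.

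It remains to control $\|f_1\|_{L_2(0,T_1)}$, and this is where the hypotheses enter. In scaling coordinates $\ell_1^0w$ becomes $I+\beta_1S$, where $\beta_1=a_1q^{\frac{1}{2}}$ and $S$ is the backward shift on levels. If $|a_1|<q^{-\frac{1}{2}}$, then $|\beta_1|<1$ and $\|\ell_1^0w\|_{L_2(0,T_1)}\geqslant(1-|a_1|q^{\frac{1}{2}})\|f_1\|_{L_2(0,T_1)}$ gives the bound at once (the second hypothesis is not needed in this case). If $|a_1|>q^{-\frac{1}{2}}$, then $|\beta_1|>1$ and $I+\beta_1S$ has a nontrivial, geometrically decaying kernel (parametrised by $f_1$ on $(q^{-1}T_1,T_1)$), so edge $1$ on its own is \emph{not} coercive and the coupling to the other edges must be used. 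Choosing $j^*$ with $a_{j^*}\neq0$, which is possible since $|a_2|+\dots+|a_m|>0$, I would glue $e_1$ and $e_{j^*}$ along the vertex into a single pantograph problem on $(0,T_1+T_{j^*})$ whose operator's squared $L_2$-norm is exactly $\int_0^{T_1}(\ell_1^0w)^2+\int_0^{T_{j^*}}(\ell_{j^*}^0w)^2$; on this merged path condition~\eqref{1.6} for $e_{j^*}$ becomes a zero datum on the \emph{top} level, which pins the homogeneous solution to zero and removes the kernel.

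The main obstacle is the quantitative coercivity on this merged path. The glued operator carries coefficient $a_{j^*}$ on its finitely many top levels and $a_1$ on its infinitely many bottom (edge-$1$) levels; solving the triangular recursion forward from $h_0=0$, the Green's function is $|G_{k,i}|=\prod_{l=i}^{k-1}|\beta_l|^{-1}$. The finitely many factors from the $j^*$-levels are a bounded transient, while the infinitely many edge-$1$ factors equal $|\beta_1|^{-1}=(|a_1|q^{\frac{1}{2}})^{-1}=:\rho<1$, producing geometric off-diagonal decay $|G_{k,i}|\leqslant C\rho^{k-i}$. Schur's test (equivalently Young's inequality for the convolution kernel $\rho^{k-i}$) then bounds the solution operator on square-summable sequences, yielding $\|f_1\|^2+\|f_{j^*}\|^2\leqslant C\,\mathcal{J}_0(w)$. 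This is exactly the step that forces $|a_1|\neq q^{-\frac{1}{2}}$: at $|\beta_1|=1$ one has $\rho=1$, the Green's function ceases to decay, and the bound fails. Combining this with the unconditional estimates for $j=\overline{2,m}$ gives $\|w'\|^2_{L_2(\Gamma_m)}\leqslant C\,\mathcal{J}_0(w)$, and Lemma~\ref{L3} converts it into~\eqref{3.6}.
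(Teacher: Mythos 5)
Your proposal is correct, and its overall architecture coincides with the paper's: both arguments first dispose of the edges $e_2,\dots,e_m$ by a finite level-by-level recursion over the intervals $(T_{j,k+1},T_{j,k})$, started from the zero datum that condition \eqref{1.6} imposes on the top level, and both then transfer control to the root edge through the vertex identification \eqref{1.3} --- which is exactly where the hypothesis $|a_2|+\dots+|a_m|>0$ is consumed. The difference lies in how $e_1$ is treated and in the logical form of the argument. The paper argues by contradiction with a normalized sequence $w_{(s)}$, and on $e_1$ it avoids any case distinction and any infinite recursion: expanding $\|\ell_1^0w\|^2_{L_2(0,T_1)}$ and applying Cauchy--Schwarz together with the substitution $t\mapsto q^{-1}t$ yields the single inequality
$$
\left(1-|a_1|\sqrt{q}\right)^2\|w_1'\|^2_{L_2(0,T_1)}\leqslant \|\ell_1^0w\|^2_{L_2(0,T_1)}+a_1^2\,q\,\|w_1'\|^2_{L_2(q^{-1}T_1,T_1)},
$$
valid for every $a_1$, which reduces the whole of $e_1$ to its top sublevel $(q^{-1}T_1,T_1)$; by \eqref{1.3} that sublevel norm equals $\|w_{j}'\|_{L_2((q^{-1}-1)T_1,0)}$ and is already controlled by the final step of the finite recursion on any edge with $a_{j}\neq0$. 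Your route instead splits into $|a_1|<q^{-1/2}$ (a Neumann-series bound on $e_1$ alone, no coupling needed) and $|a_1|>q^{-1/2}$ (an infinite triangular recursion down the merged path with Green's-function decay $\rho^{k-i}$, $\rho=(|a_1|\sqrt q)^{-1}<1$, closed by Schur's test). Both mechanisms are sound and hinge on the same threshold; the paper's single inequality is more economical, while your version is directly quantitative rather than by contradiction, makes the role of $|a_1|=q^{-1/2}$ (appearance of a non-decaying kernel) more transparent, and shows as a by-product that the second hypothesis is superfluous when $|a_1|<q^{-1/2}$.
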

\begin{proof}
Предположим от противного, что найдутся $w_{\left(s\right)}=\left[w_{\left(s\right),1},\dots,w_{\left(s\right),m}\right]\in\mathcal{W}$ при $s\in\mathbb{N}$, такие что 
$
\|w_{\left(s\right)}\|_{W_2^1\left(\widetilde{\Gamma}_m\right)}=1
$
и 
\begin{equation}
\mathcal{J}_0\left(w_{\left(s\right)}\right)\leq\frac{1}{s}, \quad s\in\mathbb{N}.
\label{3.9}
\end{equation}

Используя первое выражение в \eqref{3.5} и \eqref{3.9}, для всех $s\in\mathbb{N}$ получаем 
\begin{equation}
\int_0^{T_1}\left(w^\prime_{\left(s\right),1}\left(t\right)\right)^2dt+2a_1\int_0^{T_1}w^\prime_{\left(s\right),1}\left(t\right)w^\prime_{\left(s\right),1}\left(q^{-1}t\right)dt + a_1^2\int_0^{T_1}\left(w^\prime_{\left(s\right),1}\left(q^{-1}t\right)\right)^2dt \leq \frac{1}{s}.
\label{3.-5}
\end{equation}
Применим неравенство Коши-Буняковского к среднему интегралу:
$$
a_1\int_0^{T_1}w^\prime_{\left(s\right),1}\left(t\right)w^\prime_{\left(s\right),1}\left(q^{-1}t\right)dt \geqslant -|a_1|\sqrt{q}\|w^\prime_{\left(s\right),1}\|_{L_2\left(0,T_1\right)}^2.
$$
Подставляя эту оценку в \eqref{3.-5}, имеем
\begin{equation}
\left(1-|a_1|\sqrt{q}\right)^2\|w^{\prime}_{\left(s\right),1}\|_{L_2\left(0,T_1\right)}^2\leq\frac{1}{s}+ a_1^2q\|w^{\prime}_{\left(s\right),1}\|_{L_2\left(q^{-1}T_1,T_1\right)}^2.
\label{3.-6}
\end{equation}

С другой стороны, согласно  \eqref{3.5}, для всех $j\in\overline{2, m}$, таких что  $a_j\neq 0$, почти всюду на $\left(0,T_j\right)$ справедливы оценки
\begin{equation}
\left|w_j^\prime\left(q^{-1}\left(t-\left(q-1\right)T_1\right)\right)\right|\leq \left|a_j\right|^{-1}\left|\ell_j^0 w\left(t\right)\right|+\left|a_j\right|^{-1}\left|w_j^\prime\left(t\right)\right|.
\label{3.7}
\end{equation}

Используя \eqref{3.5}, \eqref{3.2}, \eqref{3.9} и \eqref{3.7}, при всех $s\in\mathbb{N}$ и $j=\overline{2, m}$ получаем оценки
$$
\begin{cases}
\|w_{\left(s\right),j}^\prime\|^2_{L_2\left(0,T_j\right)} \leq\frac{1}{s},       & \quad \text{если }  a_j= 0,
\\
\begin{matrix}
\|w_{\left(s\right),j}^\prime\|^2_{L_2\left(\left(q^{-1}-1\right)T_1,0\right)}\leq\frac{2}{qa_j^2s}+\frac{2}{qa_j^2} \|w_{\left(s\right),j}^\prime\|^2_{L_2\left(0,\left(q-1\right)T_1\right)},  
\\
\|w_{\left(s\right),j}^\prime\|^2_{L_2\left(T_{j,k+1},T_{j,k}\right)}\leq\frac{2}{qa_j^2s}+\frac{2}{qa_j^2} \|w_{\left(s\right),j}^\prime\|^2_{L_2\left(T_{j,k},T_{j,k-1}\right)},
 \end{matrix}
& \quad \text{если }  a_j\neq 0,
\end{cases}
$$
где $k\in\left\{n\in\mathbb{N}:T_j > \left(q^{n} - 1\right)T_1\right\}$ и
$
T_{j,k}:= q^{-k}\left(T_j-\left(q^{k}-1\right)T_1\right).
$
Учитывая, что
$$
\|w_{(s),j}^\prime\|_{L_2\left(q^{-1}\left(T_j-(q-1)T_1\right),T_j\right)}=0, \quad j=\overline{2,m},
$$
из полученных неравенств следует
$\|w_{\left(s\right),j}^\prime\|_{L_2\left(0,T_j\right)}\to 0$  при $s\to\infty$ для $j=\overline{2,m}$. 
Более того, если  $a_j\neq 0$, то для таких $j$ дополнительно имеем $\|w_{\left(s\right),j}^\prime\|_{L_2\left(\left(q^{-1}-1\right)T_1,0\right)}\to 0$  при $s\to\infty$. Последнее, вместе с тождествами 
$$
\|w_{\left(s\right),j}^\prime\|^2_{L_2\left(
\left(q^{-1}-1\right)T_1,0
\right)}=\|w_{\left(s\right),1}^\prime\|^2_{L_2\left(q^{-1}T_1,T_1\right)},\quad j=\overline{2,m},
$$ 
и оценками \eqref{3.-6}, позволяет заключить, что $\|w_{\left(s\right),1}^\prime\|_{L_2\left(0,T_1\right)}\to 0$  при $s\to\infty$.

В итоге имеем $\|w_{\left(s\right)}^\prime\|_{L_2\left(\Gamma_m\right)}\to 0$  при $s\to\infty$, где $w^\prime=\left[w_{1}^\prime,\dots,w_{m}^\prime\right]$.
Поскольку $\|w^\prime\|_{L_2\left(\Gamma_m\right)}$ порождает норму в $\mathcal{W}$, эквивалентную норме $\|w\|_{W^1_2\left(\widetilde{\Gamma}_m\right)}$, то и $\|w_{\left(s\right)}\|_{W^1_2\left(\widetilde{\Gamma}_m\right)}\to 0$ при $s\to\infty$. Последнее противоречит $\|w_{\left(s\right)}\|_{W_2^1\left(\widetilde{\Gamma}_m\right)}=1$.

\end{proof}

\begin{lemma}\label{L5} Пусть $|a_1|\neq q^{-\frac{1}{2}}$ и ${|a_2|+\dots+|a_m|>0}$. Тогда существует $C_3>0$ такое, что 
$$\mathcal{J}\left( w\right)\geqslant C_3\| w\|^2_{W^1_2\left(\widetilde{\Gamma}_m\right)} \quad\forall w\in\mathcal{W}.$$
\end{lemma}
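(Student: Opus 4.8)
The plan is to read off from Lemma~\ref{L4} that the principal (first-order) form $\mathcal{J}_0$ is coercive on $\mathcal{W}$, and to treat the zero-order part as a relatively compact perturbation which, by the second estimate of Lemma~\ref{L3}, is controlled only through the $L_2$-norm. Because of this last point a direct splitting of $\mathcal{J}$ into a large multiple of $\mathcal{J}_0$ minus a multiple of $\mathcal{J}_1$ (via Young's inequality inside each square) cannot work: $\mathcal{J}_1(w)$ is dominated by $\|w\|_{L_2(\widetilde\Gamma_m)}^2$, which on the infinite-dimensional space $\mathcal{W}$ is not controlled by any fixed small multiple of $\|w\|^2_{W_2^1(\widetilde\Gamma_m)}$. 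I would therefore argue by compactness and contradiction, exactly as in Lemma~\ref{L4}.

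Assume no such $C_3$ exists and choose $w_{(s)}\in\mathcal{W}$ with $\|w_{(s)}\|_{W_2^1(\widetilde\Gamma_m)}=1$ and $\mathcal{J}(w_{(s)})\to0$. The set $\mathcal{W}$ is a closed subspace of the Hilbert space $W_2^1(\Gamma_m)$, being cut out by the continuous constraints \eqref{1.4}, \eqref{1.6} and $w_1(0)=0$; by the norm equivalence on $\mathcal{W}$ noted in the proof of Lemma~\ref{L4} the sequence $\{w_{(s)}\}$ is also bounded in $W_2^1(\Gamma_m)$. Hence it has a subsequence with $w_{(s)}\rightharpoonup w_*$ weakly in $W_2^1$ and, by compactness of the embedding of $W_2^1$ into $L_2$ on each of the finitely many intervals, $w_{(s)}\to w_*$ strongly in $L_2$, with $w_*\in\mathcal{W}$. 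Each map $w\mapsto\ell_j w$ is bounded from $W_2^1$ to $L_2$ by Lemma~\ref{L3}, hence weakly continuous, so $\|\ell_j w_*\|_{L_2}\le\liminf_s\|\ell_j w_{(s)}\|_{L_2}$ and consequently $0\le\mathcal{J}(w_*)\le\liminf_s\mathcal{J}(w_{(s)})=0$. Thus $\ell_j w_*=0$ for every $j$.

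The \emph{main obstacle} is to show that this weak limit vanishes, i.e.\ that the homogeneous problem $\ell_j w_*=0$, $w_*\in\mathcal{W}$, has only the trivial solution. Here I would propagate the vanishing along the graph starting from the root: on $e_1$ the component $w_{*,1}$ solves the homogeneous pantograph initial value problem \eqref{1.1}, \eqref{1.5} with $u_1\equiv0$ and $y_0=0$, so $w_{*,1}\equiv0$ by the unique solvability that underlies the whole construction (cf.\ \cite{17,18,6}); then the transmission conditions \eqref{1.3}, \eqref{1.4} provide zero initial history at the vertex $v_1$ for every outgoing edge, whence each $w_{*,j}$ solves a homogeneous problem \eqref{1.2} with zero data and vanishes as well. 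I expect the delicate part to be making this uniqueness rigorous in the class $W_2^1$, presumably by an energy estimate in the spirit of \eqref{3.-5}--\eqref{3.-6} rather than by the smooth-solution theory of \cite{17,18}, and to verify that it is genuinely the hypotheses of the lemma that exclude a nontrivial kernel.

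Once $w_*=0$ the contradiction is immediate and does not rely on the (only weakly lower semicontinuous) norm of the limit. Indeed $w_{(s)}\to0$ strongly in $L_2(\widetilde\Gamma_m)$, so the second estimate of Lemma~\ref{L3} gives $\mathcal{J}_1(w_{(s)})\le C_1\|w_{(s)}\|^2_{L_2(\widetilde\Gamma_m)}\to0$, whence $\|\ell_j^1 w_{(s)}\|_{L_2}\to0$ for each $j$; moreover $\|\ell_j w_{(s)}\|_{L_2}\to0$ since $\mathcal{J}(w_{(s)})\to0$ and $\alpha_j>0$. Writing $\ell_j^0 w_{(s)}=\ell_j w_{(s)}-\ell_j^1 w_{(s)}$ and using the triangle inequality in $L_2$ we obtain $\|\ell_j^0 w_{(s)}\|_{L_2}\to0$, that is $\mathcal{J}_0(w_{(s)})\to0$. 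This contradicts Lemma~\ref{L4}, by which $\mathcal{J}_0(w_{(s)})\ge C_2\|w_{(s)}\|^2_{W_2^1(\widetilde\Gamma_m)}=C_2>0$, and the proof is complete.
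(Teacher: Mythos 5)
Your overall architecture (contradiction, unit normalization, compact embedding into $L_2$, identification of a weak limit annihilated by $\ell$, then triviality of that kernel) matches the paper's, and your endgame is a legitimate variant: instead of upgrading to strong $W_2^1$-convergence by applying the inequality \eqref{3.18} to differences $w_{(s_k)}-w_{(s_l)}$ and contradicting $\|w_{(0)}\|_{W_2^1(\widetilde\Gamma_m)}=1$, as the paper does, you deduce $\mathcal{J}_0(w_{(s)})\to 0$ from $\mathcal{J}(w_{(s)})\to 0$ and $\mathcal{J}_1(w_{(s)})\to 0$ and contradict Lemma~\ref{L4} directly. That step is correct and even slightly leaner.

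The genuine gap is precisely the step you flag as delicate: triviality of the kernel. You propose to propagate the vanishing of $w_*$ \emph{forward} from the root, treating $\ell_1 w_*=0$, $w_{*,1}(0)=0$ as a homogeneous initial value problem and invoking ``the unique solvability that underlies the whole construction''. For the neutral pantograph equation \eqref{0.2} forward uniqueness is exactly what is \emph{not} available in the class you work in: the introduction (citing \cite{17,18}) states that, depending on $a$, uniqueness of the initial value problem can fail already for $C^1$-solutions, let alone in $W_2^1$, and the hypothesis $|a_1|\neq q^{-1/2}$ is not the condition $a\neq -q^{k}$ under which the smooth theory applies. Tellingly, your sketch never uses the terminal conditions \eqref{1.6}, which are the actual engine of the argument. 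The paper propagates the vanishing \emph{backward}: on the terminal segment of each edge $e_j$, $j\geqslant 2$, where $w_{*,j}\equiv 0$ by \eqref{1.6}, the identity $\ell_j w_*=0$ collapses to $a_j w_{*,j}^\prime\bigl(q^{-1}(t-(q-1)T_1)\bigr)+c_j w_{*,j}\bigl(q^{-1}(t-(q-1)T_1)\bigr)=0$, i.e.\ to the first-order linear ODE $a_j w_{*,j}^\prime+c_j w_{*,j}=0$ on the preceding subinterval with zero value at its right endpoint; iterating pushes the vanishing down to $(q^{-1}-1)T_1$, hence via \eqref{1.3} onto $(q^{-1}T_1,T_1)$ for the first edge, after which the same backward iteration finishes edge $e_1$ (the degenerate case $a_j=c_j=0$ is handled separately by the plain ODE $w^\prime+b_jw=0$ with the boundary data built into $\mathcal{W}$). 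This uses no uniqueness theory for the pantograph equation at all, only the dichotomy $a_j^2+c_j^2\neq 0$ versus $a_j=c_j=0$, and it is the piece you would need to supply to make your proof complete.
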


\begin{proof} 
Снова предположим от противного, что найдутся $w_{\left(s\right)}\in\mathcal{W}$, $s\in\mathbb{N}$, такие, что
$
\| w_{\left(s\right)}\|_{W^1_2\left(\widetilde{\Gamma}_m\right)}=1,
$
но теперь
\begin{equation}
\mathcal{J}\left( w_{\left(s\right)}\right)\leqslant\frac{1}{s}, \quad s\in\mathbb{N}.
\label{3.13}
\end{equation}

Из неравенства
$$ \widetilde{\alpha}_2\mathcal{J}_0\left(w\right)\leq 2\mathcal{J}\left(w\right)+2\widetilde{\alpha}_1\mathcal{J}_1\left(w\right),\quad \widetilde{\alpha}_1=\max\left\{\alpha_1,\dots,\alpha_m\right\},\quad\widetilde{\alpha}_2=\min\left\{\alpha_1,\dots,\alpha_m\right\}, $$
совместно с оценками  \eqref{3.1} и \eqref{3.6} получаем
\begin{equation}
\frac{\widetilde{\alpha}_2 C_2}{2}\| w\|^2_{W^1_2\left(\widetilde{\Gamma}_m\right)}\leqslant\mathcal{J}\left( w\right)+\widetilde{\alpha}_1 C_1\| w\|^2_{L_2\left(\Gamma_m\right)}.
\label{3.18}
\end{equation}

В силу компактности вложения $W^1_2\left(\Gamma_m\right)$ в $L_2\left(\Gamma_m\right)$ найдется подпоследовательность $\{ w_{\left(n_k\right)}\}_{k\in\mathbb{N}}$ фундаментальная в $L_2\left(\Gamma_m\right)$.
Тогда неравенство \eqref{3.18} дает
$$
\frac{\widetilde{\alpha}_2 C_2}{2}\| w_{\left(s_k\right)}- w_{\left(s_l\right)}\|^2_{W^1_2\left(\widetilde{\Gamma}_m\right)}\leqslant\mathcal{J}\left( w_{\left(s_k\right)}- w_{\left(s_l\right)}\right)+\widetilde{\alpha}_1C_1\| w_{\left(s_k\right)}- w_{\left(s_l\right)}\|^2_{L_2\left(\Gamma_m\right)}.
$$
Кроме того, используя \eqref{3.2} при $n=2$ и \eqref{3.13}, имеем
$$\mathcal{J}\left( w_{\left(s_k\right)}- w_{\left(s_l\right)}\right)\leqslant\frac{2}{s_k}+\frac{2}{s_l}.$$
Таким образом, последовательность $\{ w_{\left(s_k\right)}\}_{k\in\mathbb{N}}$ является фундаментальной в $\mathcal{W}$ и сходится к некоторой функции $ w_{\left(0\right)}\in\mathcal{W}$.

В силу леммы \ref{L3} сходимость $ w_{\left(s_k\right)}$ к $w_{\left(0\right)}$ в $\mathcal{W}$ влечет сходимость $\ell w_{\left(s_k\right)}$ к $\ell w_{\left(0\right)}$ в $L_2\left(\Gamma_m\right)$. 
Следовательно, в силу \eqref{3.13} будем иметь
$$\|\ell w_{\left(0\right)}\|^2_{L_2\left(\Gamma_m\right)}=\displaystyle\lim_{k \to \infty}\|\ell w_{\left(s_k\right)}\|^2_{L_2\left(\Gamma_m\right)} =\displaystyle\lim_{k \to \infty}\mathcal{J}\left(w_{\left(s_k\right)}\right)=0,$$
т.е. $w_{\left(0\right)}\in\mathcal{W}$ удовлетворяет уравнениям
$$
w_{\left(0\right),1}^\prime(t)+a_1w_{\left(0\right),1}^\prime(q^{-1}t)+b_1w_{\left(0\right),1}(t)+c_1w_{\left(0\right),1}(q^{-1}t)=0, 
$$
$$
w_{\left(0\right),j}^\prime(t)+a_jw_{\left(0\right),j}^\prime(q^{-1}(t-(q-1)T_1))+b_jw_{\left(0\right),j}(t)+c_jw_{\left(0\right),j}(q^{-1}(t-(q-1)T_1))=0, \; \; j=\overline{2, m}.
$$

Если для некоторого $j\in\overline{1,m}$ выполняется $a_j=c_j=0$, то уравнение для $w_{\left(0\right),j}$ сводится к обычному дифференциальному уравнению, решение которого, с учетом принадлежности $w_{(0)}$ классу $\mathcal{W}$,  дает $w_{\left(0\right),j}(t)=0$ при $0<t<T_j$.

Рассмотрим $j\in\overline{1,m}$, при которых $a_j^2+c_j^2\neq 0$. Поскольку $w_{\left(0\right),j}\left(t\right)=0$, $j=\overline{2, m}$ при 
$t~\geqslant~q^{-1}\left(T_j-\left(q-1\right)T_1\right)$ соответствующие уравнения 
на интервале $\left(q^{-1}\left(T_j-\left(q-1\right)T_1\right),T_j\right)$  принимают вид
$$a_jw_{\left(0\right),j}^\prime(q^{-1}(t-(q-1)T_1))+c_jw_{\left(0\right),j}(q^{-1}(t-(q-1)T_1))=0,\quad j=\overline{2, m},$$
или
$$a_jw_{\left(0\right),j}^\prime(t)+c_jw_{\left(0\right),j}(t)=0,\quad q^{-2}\left(T_j-\left(q^2-1\right)T_1\right)<t<q^{-1}\left(T_j-\left(q-1\right)T_1\right), \quad j=\overline{2, m}.$$
Отсюда с учетом $w_{\left(0\right),j}\left(q^{-1}\left(T_j-\left(q-1\right)T_1\right)\right)=0$ следует, что $w_{\left(0\right),j}(t)=0$, $j=\overline{2, m}$, при $t~\geqslant~q^{-2}\left(T_j-\left(q^2-1\right)T_1\right)$. 
Продолжая дальше влево аналогичным образом, получаем $w_{\left(0\right),j}(t)=0$ при $\left(q^{-1}-1\right)<t<T_j$, $j=\overline{2, m}$.
Учитывая, что $w_{(0),j}(t)=w_{(0),1}(t+T_1)$ при $(q^{-1}-1)T_1<t<0$, из чего следует $w_{(0),1}(t)=0$ для $q^{-1}T_1<t<T_1$, аналогичными рассуждениями можем получить $w_{(0),1}(t)=0$ для $0<t<T_1$.

В итоге имеем $w_{\left(0\right)}(t)=0$, что противоречит $\| w_{\left(0\right)}\|_{W^1_2\left(\widetilde{\Gamma}_m\right)}=1$.
\end{proof}

Следующая теорема является основным результатом данного раздела.

\begin{theorem}\label{Lednov_T2}
Пусть $|a_1|\neq q^{-\frac{1}{2}}$ и ${|a_2|+\dots+|a_m|>0}$. Тогда краевая задача $\mathcal{B}$ имеет единственное обобщенное решение $y\in W^1_2\left(\Gamma_m\right)$. Кроме того, существует $C$ такое, что 
\begin{equation}\|y\|_{W^1_2\left(\Gamma_m\right)}\leqslant C|y_0|.\label{3.19}\end{equation}
\end{theorem}

\begin{proof}
Рассмотрим функцию $\Phi=\left[\Phi_1,\dots,\Phi_m\right]\in W^1_2\left(\Gamma_m\right)$ такую, что 
\[
\Phi_1\left(t\right) = \begin{cases} 
y_0\left(1-\frac{qt}{T_1}\right), & 0 \leqslant t < q^{-1}T_1, \\
0, & q^{-1}T_1 \leqslant t \leqslant T_1,
\end{cases}
\quad \Phi_j\left(t\right) \equiv 0, \quad j = 2, \dots, m.
\]
В силу леммы \ref{L2}, для того чтобы функция $y\in W^1_2\left(\Gamma_m\right)$, удовлетворяющая условиям \eqref{1.5}, \eqref{1.4}, \eqref{1.6}, была решением краевой задачи $\mathcal{B}$, необходимо и достаточно, чтобы выполнялось условие \eqref{2.1}. Другими словами, $y$ является решением краевой задачи $\mathcal{B}$,
если и только если $x:=y-\Phi\in\mathcal{W}$ и
\begin{equation}B\left(\Phi, w\right)+B\left(x, w\right)=0 \quad\forall w\in\mathcal{W}.\label{3.20}\end{equation}

Так как  $B\left( w, w\right)=\mathcal{J}\left( w\right)$, то в силу леммы \ref{L5} билинейная форма $\left(\cdot,\cdot\right)_{\mathcal{W}}:= B\left( \cdot,\cdot \right)$ является скалярным произведением в $\mathcal{W}$. Кроме того, справедлива оценка
\begin{equation}\left|B\left(\Phi, w\right)\right|= \alpha_1\left|\displaystyle\int_{0}^{T_1}\ell_1\Phi\left(t\right)\ell_1 w\left(t\right) dt \right| \leqslant M|y_0| \| w\|_{\mathcal{W}},\label{3.21}\end{equation}
где $\| w\|_{\mathcal{W}}=\sqrt{\left( w, w\right)_{\mathcal{W}}}$. Таким образом, по теореме Рисса об общем виде линейного непрерывного функционала в гильбертовом пространстве существует единственная функция $x\in\mathcal{W}$ такая, что выполняется \eqref{3.20}. Следовательно, краевая задача $\mathcal{B}$ имеет единственное решение $y=\Phi+x$.
Кроме того, согласно \eqref{3.20} и \eqref{3.21} имеем
$$
\|x\|_{\mathcal{W}}\leq M|y_0|,
$$
что позволяет получить оценку \eqref{3.19}.
\end{proof}

\begin{remark}
Утверждение теоремы \ref{Lednov_T2} может быть дополнено следующим результатом из \cite{98}. А именно: в случае, когда система \eqref{1.1}--\eqref{1.4} имеет запаздывающий тип (т.е. $a_j=0$ для всех $j=\overline{1,m}$), краевая задача $\mathcal{B}$ имеет единственное решение; при этом $y\in W^1_2(\Gamma_m)\cap W^2_2(\widetilde{\Gamma}_m)$ и также справедлива априорная оценка \eqref{3.19}.
\end{remark}

\end{document}